\newtheorem{theorem}{Theorem}
\newtheorem*{theorem*}{Theorem}
\newtheorem*{problem1}{Problem 66}
\newtheorem*{problem2}{Problem 67}
\newtheorem{proposition}{Proposition}
\theoremstyle{definition}
\theoremstyle{remark}
\title[Problems 66 and 67 of S\'{a}rk\"{o}zy]{Problems 66 and 67 on sums of residue classes and primes
  of Andr\'{a}s S\'{a}rk\"{o}zy's
  collection of unsolved problems}
\author{Yuchen Ding, Christian Elsholtz, Yu-Chen Sun}
\date{\today}
\address{(Yuchen Ding) School of Mathematics,  Yangzhou University, Yangzhou 225002, People's Republic of China}
\email{ycding@yzu.edu.cn}
\address {(Christian Elsholtz) Institute of Analysis and Number Theory, Graz University of Technology, Kopernikusgasse 24/II,
8010 Graz, Austria}
\email{\tt elsholtz@math.tugraz.at}
\address {(Yu--Chen Sun) School of Mathematics, University of Bristol, Bristol, BS8 1UG, England}
\email{\tt yuchensun93@163.com}
\subjclass[2010]{Primary 11A41, 11A05}
	\keywords{Goldbach type problems, sumsets, Dirichlet's theorem in arithmetic progressions, congruence obstacle, residue classes}
\begin{document}

\begin{abstract}
		In this note, we discuss two problems of S\'{a}rk\"{o}zy (2001). 
		In particular, we prove an optimal result on sumsets of sparse subset of primes.
	\end{abstract}

\maketitle

\section{S\'{a}rk\"{o}zy's problems}
In this paper we look at two problems from 
the collection of open problems by Andr\'{a}s S\'{a}rk\"ozy (2001) \cite{Sarkozy}.

\begin{problem1}
 Is it true, that if $\varepsilon > 0, m \in  \mathbb{N}$, 
$m$ is even, $m \rightarrow \infty$, and $A$ is
a set of distinct modulo $m$ reduced\footnote{The 
word ``reduced"
is missing here, but it is required, for details see the beginning of section \ref{section:problem66}. Also, 
S\'{a}rk\"{o}zy compares with $\varphi(m)$, which is the number of reduced residue classes modulo $m$.} residue classes  with
$
|A| >\Big(\frac{1}{2}+ \varepsilon\Big)\varphi(m),
$
then\\
a) $A + A$ contains almost all even residue classes modulo $m$;\\
b) $A + A + A$ contains every odd residue class modulo $m$?
\end{problem1}

\begin{problem2} 
Is it true that if $Q = \{q_1, q_2, \ldots \}$ is an infinite set of primes such that
$\liminf_{x\rightarrow \infty}
\frac{\log x}{x} |\{q_i : q_i \leq x, q_i \in  Q\}| > \frac{1}{2},$
then every large odd integer $2n + 1$ can be represented in the form
$
q_i + q_j + q_k=2n+1 ~(\text{with~} q_i,q_j,q_k \in Q) ?
$
\end{problem2}

Here $A+A=\{a_i+a_j: a_i, a_j\in A\}$
is the usual sumset notation, similarly $Q-W=\{q_i-w_j:q_i \in Q, w_j \in W\}$.

Problem 66 can be understood as an analogue of the well known problems of sums of two primes (in part a) and  sums of three primes (in part b) in the positive integers, to the group $\mathbb{Z}_m^*$ of reduced residue classes modulo $m$. For sums of two primes the famous Goldbach conjecture states that every even integer $n \geq 4$ can be written as a sum of two primes. This conjecture is still open. There are many partial results proving it for almost all even numbers, including a result by Pintz \cite{Pintz} saying that the number of possible
exceptions below $X$ is at most $X^{3/4}$, for sufficiently large $X$.

The so called ternary Goldbach conjecture states, that every odd integer $n \geq 7$ can be written as a sum of three primes. This was proven, under the additional assumption of the generalized Riemann Hypothesis (GRH)  for all but at most finitely many exceptions by Hardy and Littlewood (1923), and unconditionally for all but at most finitely many exceptions by I.M. Vinogradov (1937) \cite{Vinogradov}.
A complete proof of the ternary Goldbach problem was  announced by Harald Helfgott \cite{Helfgott}, the final version is yet to appear.

These very well known problems in the integers have their origin in a correspondence of 1742 between Christian Goldbach and Leonhard Euler, even though the precise statements were a bit different: at that time the integer 1 was considered to be a prime, whereas today one does not consider 1 as a prime, by definition, as it would make statements about  unique factorization much more clumsy.

An analogue of the binary Goldbach problem has been proved in the ring of polynomials, and has been a popular theme \cite{Hayes, Kozek, Pollack, Saidak} and see also \cite{Effinger-Hayes}. Note that in these cases a positive proportion of the polynomials is irreducible. 

For  Problem 67 subsets of the primes now come into the play.
Let $\pi(x)$ denote the number of primes $p\leq x$.
The function $\pi(x)$ grows
asymptotically like $\frac{x}{\log x}$.
S\'{a}rk\"{o}zy's question 67 is a very early example of a question on sums of primes with a positive density, (relative to the set of all primes).
(In fact, we are aware of some growing literature on this topic, starting in 2011, see e.g.~\cite{Alsetri-1, Alsetri, Chipeniuk-Hamel,  Cui-Li-Xue, Gao, Matomaki, Shao, Yang-Togbe}.)
Problem 67 can be understood as asking whether the Vinogradov approach (using the circle) method can be extended if one only uses a bit more than half of all primes, or whether other serious obstacles could occur.

In \cite{Yang-Togbe} Yang and  Togb\'{e} answered negatively  Problem 67 and obtained more explicit results on this problem. We will make further considerations of it later in Section 3.

\section{Problem 66}{\label{section:problem66}}
There is a small inaccuracy in S\'{a}rk\"{o}zy's original statement.
Without the word  ``reduced" simple counter examples are products of the first primes:
$m= p_1 p_2 \cdots p_t$. Here it is well known that $\varphi(m) \sim e^{-\gamma} \frac{m}{\log \log m}$, where $e=2.718\ldots$ 
is the base of the natural logarithm and 
$\gamma=0.577\ldots $ is the Euler-Mascheroni constant. With such small values of $\varphi(m)$ the choice ${\mathcal A}=\{i \mod m: 1 \leq i \leq \varphi(m)\}$
shows that $|{\mathcal A}+ {\mathcal A}|=2\varphi(m)-1$ and $|{\mathcal A}+ {\mathcal A}+{\mathcal A}|=3\varphi(m)-2$. As $m=p_1 \cdots p_t\rightarrow \infty$, the proportion of residue classes represented by $A+A$ or $A+A+A$ respectively, tends to $0$.

Next let us first look at the ternary case b) for which we not only find counter examples to the question, but also find an essentially  best possible answer to the problem. 
As the problem is about a ternary sum of reducible residue classes it is natural to compare this with results on the sum of three primes.

In his work on a density result to the ternary Goldbach problem Shao \cite{Shao} gave the following example.
Let $A_1=\{1,2,4,7,13 \}\subset \mathbb{Z}/(15 \mathbb{Z})$.\footnote{
We remark that
in a later paper Shen \cite{Shen} used
$A_2=\{1,4,7,11,13\}$ to avoid the class $2  \mod 15$ in $A_2+A_2+A_2$. This second example is isomorphic to the first one, as a multiplication by $13$ shows.}
Then $|A_1|> \frac{\varphi(15)}{2}=4$, but the residue class $14\pmod {15}$
is not in $A_1+A_1+A_1$, which would answer negatively Problem 66b via slight adjustments according to its requirements.
As S\'{a}rk\"{o}zy's question is about large even $m$, we modify the example above as follows.
Let $p$ be a sufficiently large prime, let $m=30p$ and therefore $\varphi(m)=8(p-1)$, and let
$$A=\{30k+1,30k+7,30k+13,30k+17,30k+19: 1\le 30k\le m\}.$$
For each  $i\in \{1,7,13,17,19\}$ there is at most one $k$ with the constraint $1\le 30k\le m$ satisfying
$30k+i\equiv 0\pmod{p}$, leading to the bound
$$ |A|\geq \frac{m}{6}-5= \frac{5}{8}\varphi(m).$$
One easily checks that
$29\pmod{30}$ cannot be written as the sum of $A_0+A_0+A_0$, where
$$A_0=\{1,7,13,17,19\pmod{30}\}.$$
As a simple consequence, we know that no member of $A+A+A$ is contained in 
$\{30k+29\pmod{m}:1\le 30k\le m\}$,
answering negatively problem 66b).

Next we come to Problem 66a).
Here we have an even simpler counter example, based on residue classes modulo 12. Let $p$ be a sufficiently large prime, $m=12p$, and
$$A=\{12k+1,12k+5,12k+7: 1\le 12k\le m\}.$$
Similar discussions as above lead to $|A|\ge 2\varphi(m)/3$ as well as that no element of 
$$
\{12k+4\pmod{m}:1\le 12k\le m\}
$$
is in $A+A$, which yields a proportion of 1/6 of all the even residue classes modulo $m$ not covered by $A+A$.  

The following quite different type of counter example to Problem 66a) may be of independent interest.
If a class $a$ in $\{1,2,4,7,13\}$ is even we replace it by $a+15$.
This gives the set $A=\{1,7,13,17,19 \}$, understood as residue classes modulo $30$.
Then $|A|> \frac{\varphi(30)}{2}=4$, but the residue classes 
$$
B=\{10,12,16,22,28\pmod{30} \}
$$
are not in $A+A$. We obtain the infinite family $m_k= 30 \cdot 2^k$.
Choose 
$$
A_1=A \cup (A+30)\subset\mathbb{Z}/(60 \mathbb{Z})
$$
and for any $k\ge 1$,
\[\quad A_{k+1}= A_k \cup (A_k+2^k\cdot 30)
\subset \mathbb{Z}/(30\cdot 2^{k+1} \mathbb{Z}).\]
Hence $|A_{k}|= 5 \cdot 2^k >\frac{1}{2}\varphi(30 \cdot 2^k)=2^{k+2}$, and $A_k+A_k$ misses out a positive proportion of the even residue classes modulo $m_k$.
This provides another counter example of Problem  66a).

We point out that S\'{a}rk\"ozy's predicition, i.e., Problem 66 b), is not far from the truth. Actually, Shao \cite[Corollary 1.5]{Shao} proved the following very interesting result.
\begin{proposition}\label{shao-theorem}
Let $m$ be a square-free positive odd integer. Let $A$ be a
subsets of $\mathbb{Z}_m^* $ with $|A| > \frac{5}{8}\varphi(m).$
Then
$A + A + A= \mathbb{Z}_m.$ 
\end{proposition}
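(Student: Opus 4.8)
The plan is to combine the Cauchy--Davenport inequality, applied prime by prime, with Kneser's theorem on the whole group, exploiting that $m$ is squarefree and odd. Write $m=p_1\cdots p_r$, so that by the Chinese Remainder Theorem $\mathbb{Z}_m\cong\prod_{i}\mathbb{Z}_{p_i}$ and $\mathbb{Z}_m^*\cong\prod_i\mathbb{Z}_{p_i}^*$. Fixing a target $c\in\mathbb{Z}_m$, the goal is to produce $a_1,a_2,a_3\in A$ with $a_1+a_2+a_3=c$, i.e.\ to show $c\in A+A+A$ for every $c$.

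First I would record the single-prime statement that pins down the constant $\tfrac{5}{8}$. For a nonempty $S\subseteq\mathbb{Z}_p\setminus\{0\}$ the Cauchy--Davenport inequality gives $|S+S+S|\ge\min(p,\,3|S|-2)$. If $|S|>\tfrac{5}{8}(p-1)$ then $3|S|-2\ge p$ for every prime $p\ge 5$, so $S+S+S=\mathbb{Z}_p$; and for $p=3$ the only admissible set $S=\{1,2\}$ also satisfies $S+S+S=\mathbb{Z}_3$. Thus $\tfrac{5}{8}$ is exactly the density at which the three-fold sum becomes surjective at each prime, with $p=5$ (and the composite modulus $15$) being the tight case. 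This explains why the constant cannot be lowered, and why the set $A_1\subset\mathbb{Z}_{15}$ above, sitting at equality, is a genuine obstruction.

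The substantial difficulty is the gluing across primes, since $A$ need not be a product set: prime-by-prime surjectivity does not by itself yield a single triple working simultaneously modulo every $p_i$. The $\mathbb{Z}_{15}$ example is the prototype --- both coordinate projections of $A_1$ are full, yet $(2,4)$ is missed because representing $2\bmod 3$ forces using the unique class $\equiv 2\pmod 3$ twice, which in turn dictates the residue modulo $5$. To control this, I would argue by contradiction: assume $A+A+A\neq\mathbb{Z}_m$ and let $H$ be the stabilizer of $A+A+A$. As $m$ is squarefree, every subgroup of $\mathbb{Z}_m$ has the form $H_d\cong\mathbb{Z}_d$ for a divisor $d\mid m$, and here $d$ is proper because the sum is not all of $\mathbb{Z}_m$. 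Kneser's theorem then gives $|A+A+A|\ge 3|A+H|-2|H|$, while $H$-periodicity together with the omission of at least one coset gives $|A+A+A|\le m-|H|$.

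Passing to the quotient $\mathbb{Z}_{m/d}$, in which $A$ still maps into the units, I would feed the density hypothesis into these inequalities to force $d$ and the primes involved to be small, reducing matters to a finite analysis built from the primes $3$ and $5$; the strict inequality $|A|>\tfrac{5}{8}\varphi(m)$ is precisely what excludes the extremal configuration of $\mathbb{Z}_{15}$ type. The main obstacle, where the real work lies, is extracting from Kneser's (or Vosper's) theorem enough structural information about near-extremal $A$ to rule out these small-prime gluings: the crude Kneser bound alone ignores the unit constraint and is too weak to reach $\tfrac{5}{8}$, so the unit structure over the primes $3$ and $5$ must be used directly.
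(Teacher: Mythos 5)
Your write-up is an outline with the decisive step missing, and you say so yourself: the ``gluing across primes'' is exactly where you stop (``the main obstacle, where the real work lies''). The parts you do carry out --- the Cauchy--Davenport computation showing $|S|>\frac58(p-1)$ forces $S+S+S=\mathbb{Z}_p$ for $p\ge 5$, the separate check at $p=3$, and the analysis of why $A_1=\{1,2,4,7,13\}\subset\mathbb{Z}_{15}$ misses $14$ (hitting $2\bmod 3$ forces the unique class $2\bmod 3$ twice, and $2+2+a \bmod 5$ then omits $4$) --- are correct, but they are the easy half, and the second is already in the paper as the motivating example. The Kneser route you sketch for the hard half provably cannot be closed by counting alone: for squarefree odd $m$ with several prime factors, $\varphi(m)/m$ drops below $8/15$, and then $3|A|-2$ with $|A|$ just above $\frac58\varphi(m)$ is smaller than $m$ (e.g.\ $m=105$: $|A|\ge 31$ gives only $|A+A+A|\ge 91<105$ even in the trivial-stabilizer case), so no contradiction arises. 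Nor does the stabilizer quotient restore the hypothesis: if the stabilizer $H$ has order $d$, then since $\varphi(m)=\varphi(d)\,\varphi(m/d)$ for squarefree $m$, the image $\bar A\subseteq\mathbb{Z}_{m/d}^*$ is only guaranteed to satisfy $|\bar A|>\frac58\varphi(m/d)\cdot\frac{\varphi(d)}{d}$; the relative density degrades by the factor $\varphi(d)/d<1$ and the induction does not close. So the step you defer is not a technical detail; it is the entire content of the proposition.

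For comparison: the paper offers no proof either --- Proposition \ref{shao-theorem} is quoted verbatim as Shao's result \cite[Corollary 1.5]{Shao} (with Shen's three-set refinement \cite{Shen} also cited), so the standard to match is Shao's argument. That argument does not proceed by Kneser-type counting on $\mathbb{Z}_m$; it works with the CRT fibers directly, inducting on the prime factors of $m$ and tracking, for each prime $p\mid m$ and each admissible choice of residues $t_1+t_2+t_3\equiv x\pmod p$, how much density survives in the fibers over $t_1,t_2,t_3$ --- precisely the ``unit structure over the primes $3$ and $5$'' that you point to, with those two primes supplying the extremal losses that produce the constant $\frac58$. To have a complete proof you must either reconstruct that fiber induction or cite \cite{Shao}; as it stands, your proposal is a correct framing of the problem and a correct identification of the obstruction, but not a proof.
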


Here $\mathbb{Z}_m=\mathbb{Z}/m\mathbb{Z}$ and $\mathbb{Z}_m^* =(\mathbb{Z}/m\mathbb{Z})^*$.
This gives a best possible result for odd $m$,
which is sharp, for example when $m=15$. 

As a conclusion, we now have the following theorem.
\begin{theorem}\label{thm1}
The answers to Problem 66 a) and b) are both negative. Moreover, if we replace $1/2$ and even $m$ by $5/8$ and odd squre-free $m$ respectively, the answer to b) will be positive. 
\end{theorem}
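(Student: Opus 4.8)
The plan is to assemble Theorem \ref{thm1} from the three ingredients already developed: explicit counter examples for parts a) and b), together with Shao's Proposition \ref{shao-theorem} for the positive refinement of b). None of these needs to be reproved; the work of the proof is to exhibit families with $m\to\infty$ satisfying the hypotheses but violating the conclusions, and to verify that the underlying finite obstructions persist.

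For the negative answer to Problem 66a), I would point to the family built on residue classes modulo $12$. Taking $m=12p$ with $p$ a large prime and $A=\{12k+1,12k+5,12k+7:1\le 12k\le m\}$, the only verification needed is the density estimate $|A|\ge \tfrac{2}{3}\varphi(m)$, which for any fixed $\varepsilon<\tfrac16$ exceeds $(\tfrac12+\varepsilon)\varphi(m)$, and the claim that $A+A$ omits every class in $\{12k+4\pmod m\}$. Both reduce to a finite computation modulo $12$: one checks that $4\pmod{12}$ is not a sum of two elements of $A_0=\{1,5,7\}$, and that this obstruction lifts to $m$ since for each class the congruence $p\mid 12k+i$ removes at most one value of $k$. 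Because the omitted proportion (a sixth of the even classes) does not decay as $m=12p\to\infty$, part a) fails. The alternative family $m_k=30\cdot 2^k$ provides a structurally different witness and can be presented in addition.

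For the negative answer to Problem 66b), I would use $m=30p$ together with $A=\{30k+1,30k+7,30k+13,30k+17,30k+19:1\le 30k\le m\}$. Again the content is a finite check modulo $30$: verifying that $29\pmod{30}$ is not representable as a threefold sum from $A_0=\{1,7,13,17,19\pmod{30}\}$, whence no class in $\{30k+29\pmod m\}$ lies in $A+A+A$. Combined with the bound $|A|\ge \tfrac58\varphi(m)$, which for any fixed $\varepsilon<\tfrac18$ exceeds $(\tfrac12+\varepsilon)\varphi(m)$, this contradicts the assertion that every odd class is covered, so part b) fails as well. For the positive refinement I would invoke Proposition \ref{shao-theorem} directly: for odd square-free $m$ and $A\subset\mathbb{Z}_m^*$ with $|A|>\tfrac58\varphi(m)$, Shao's result gives $A+A+A=\mathbb{Z}_m$, so in particular every odd residue class is represented, which is exactly the conclusion of b) under the modified hypotheses; the $m=15$ example shows that $5/8$ cannot be lowered.

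The main obstacle is not in this synthesis but in the underlying Proposition \ref{shao-theorem}: the positive direction rests entirely on Shao's density theorem, whose proof (via the structure of large subsets of $\mathbb{Z}_m^*$) I take as given. On the negative side the only genuine mathematical content is the verification of the two finite sumset obstructions modulo $12$ and modulo $30$; everything else is bookkeeping of densities as $m\to\infty$.
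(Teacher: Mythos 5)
Your proposal is correct and takes essentially the same route as the paper: the family $m=12p$ with $A_0=\{1,5,7\}\pmod{12}$ for the negative answer to a), the family $m=30p$ with $A_0=\{1,7,13,17,19\}\pmod{30}$ and the finite obstruction $29\pmod{30}\notin A_0+A_0+A_0$ for the negative answer to b), and a direct appeal to Proposition \ref{shao-theorem} for the positive refinement. The details you flag --- deleting the at most one $k$ per class with $p\mid 12k+i$ (resp.\ $30k+i$) so that $A$ consists of reduced residues, and the density estimates $|A|\ge\tfrac23\varphi(m)$ and $|A|\ge\tfrac58\varphi(m)$ --- match the paper's bookkeeping exactly.
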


It is here also worth mentioning a more general version proved by 
Shen \cite[Corollary 1.4]{Shen}.

{\it Let $m$ be a square-free positive odd integer. Let $A_1, A_2, A_3$ be
three subsets of $\mathbb{Z}_m^* $ with $|A_1| > \frac{5}{8}\varphi(m),
|A_i| \geq  \frac{5}{8}\varphi(m)\,  (i = 2, 3)$.
Then
$$A_1 + A_2 + A_3 = \mathbb{Z}_m.$$ }

\section{Problem 67}
This question is on primes, but as almost all of the primes lie in the reduced residue classes modulo any fixed integer $m$ the two problems are closely connected.
As pointed out by Yang and Togb\'{e} \cite{Yang-Togbe}, the negative  answer of Problem 67 can be concluded from Shao \cite{Shao} as follows. Let $Q$ be the set of primes in the residue classes
$1,2,4,7,13 \pmod {15}$. Then the counting function of elements $p \leq x$ of $Q$
is $Q(x)\sim \frac{5}{8}\frac{x}{\log x}> \frac{1}{2}\frac{x}{\log x}$, by Dirichlet's theorem on primes in arithmetic progressions \cite{Davenport}.
As the residue class 14 modulo 15 
cannot be written as a ternary sum of $\{1,2,4,7,13\}$
there is even a positive proportion of all integers which are counter examples to this question. 

We now state Shao's important  result below for further discussions.
\begin{proposition}[Shao \cite{Shao}]
If $Q = \{q_1, q_2, \ldots \}$ is an infinite set of primes such that
\begin{align}\label{eq-des-1}
\liminf_{x\rightarrow \infty}
\frac{\log x}{x} |\{q_i : q_i \leq x, q_i \in  Q\}| > \frac{5}{8},
\end{align}
then every large odd integer $2n + 1$ can be represented in the form
$
q_i + q_j + q_k=2n+1 ~(\text{with~} q_i,q_j,q_k \in Q).
$
\end{proposition}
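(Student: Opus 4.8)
The plan is to prove positivity of a weighted count of representations via a density (transference) version of the Hardy--Littlewood--Vinogradov circle method. Write $N = 2n+1$ and set
\[
S(\alpha) = \sum_{\substack{p \le N \\ p \in Q}} (\log p)\, e(p\alpha), \qquad e(t) = e^{2\pi i t},
\]
so that the number of ordered solutions of $N = q_i + q_j + q_k$ with $q_i,q_j,q_k \in Q$, weighted by $\log q_i \log q_j \log q_k$, is exactly $\int_0^1 S(\alpha)^3 e(-N\alpha)\,d\alpha$. It then suffices to show that this integral is positive (indeed of the expected size $N^2/(\log N)^3$) for every sufficiently large odd $N$. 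I would dissect $[0,1]$ into major arcs about rationals $a/q$ with $q$ small and the complementary minor arcs. On the minor arcs one needs $|S(\alpha)|$ to be small on average; since $Q$ lies inside the primes and has positive relative density, a pseudorandom majorant for the primes together with the restriction ($L^p$) estimates underlying the transference principle shows that the minor-arc contribution is $o\!\left(N^2/(\log N)^3\right)$. The density hypothesis is not yet used here, only that $Q \subseteq \{\text{primes}\}$.

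The heart of the matter is the major-arc analysis, handled by the $W$-trick. Fix a slowly growing $w$ and put $W = \prod_{2 < p \le w} p$, an odd square-free modulus; the prime $2$ may be omitted because $N$ is odd and a sum of three odd primes is automatically odd, so there is no obstruction modulo $2$, matching the hypotheses of Proposition \ref{shao-theorem}. Splitting $Q$ according to residue classes $b \bmod W$ and applying the prime number theorem in arithmetic progressions (Dirichlet's theorem) converts the global density bound \eqref{eq-des-1} into the assertion that the average, over reduced classes, of the relative density of $Q$ exceeds $\tfrac{5}{8}$. The major-arc main term factors through $\mathbb{Z}_W$, and its positivity reduces to solving $b_1 + b_2 + b_3 \equiv N \pmod{W}$ with each $b_i$ a reduced class carrying a positive proportion of $Q$.

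This is exactly where Proposition \ref{shao-theorem} enters. Writing $A \subseteq \mathbb{Z}_W^*$ for the set of reduced classes in which $Q$ has positive density, the density bound forces $|A| > \tfrac{5}{8}\varphi(W)$, whence $A + A + A = \mathbb{Z}_W$; in particular $N \bmod W$ is representable, and letting $w \to \infty$ removes every finite local obstruction, so the resulting singular series is positive and bounded away from $0$. Combining the positive main term with the negligible minor arcs yields $\int_0^1 S(\alpha)^3 e(-N\alpha)\,d\alpha \gg N^2/(\log N)^3 > 0$, which produces the required representation of $N$.

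The step I expect to be the main obstacle is the honest reduction of the density hypothesis to a sumset statement to which Proposition \ref{shao-theorem} can be applied. What truly governs the major-arc main term is a weighted convolution of the fractional densities $\delta_b$ of $Q$ in the individual classes $b \bmod W$, not merely the indicator of the occupied classes; one must show that the extremal configuration for this weighted problem is the one in which the densities are forced to the values $0$ and $1$, so that the combinatorial threshold $\tfrac{5}{8}$ of Proposition \ref{shao-theorem} is the true threshold. Controlling this degeneration uniformly as $w \to \infty$, simultaneously with the transference and restriction estimates, is the delicate point, and it is here that the sharp constant $5/8$ (already sharp for $m = 15$) genuinely emerges.
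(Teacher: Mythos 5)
Your outline does reproduce, at the level of architecture, the strategy of Shao's actual proof (the paper itself gives no proof of this proposition --- it is quoted from \cite{Shao}): a transference-style circle method with a $W$-trick, with the arithmetic input being a local theorem modulo an odd square-free $W$. But two steps are genuinely gapped, one of them fatally as written. First, the minor arcs: for an \emph{arbitrary} subset $Q$ of the primes there is no nontrivial pointwise bound for $S(\alpha)$ off the major arcs --- $Q$ may correlate with any fixed phase or residue class (e.g.\ $Q=\{p:\{p\alpha_0\}\in[0,\tfrac18]\}$ makes $|S(\alpha_0)|$ of maximal order) --- so ``the minor-arc contribution is $o(N^2/(\log N)^3)$'' does not follow from a pseudorandom majorant plus restriction estimates alone. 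The transference principle does not make minor arcs small; it replaces that step by decomposing $1_Q$ against a dense model, and \emph{all} the large Fourier coefficients, wherever they sit, must be absorbed into the structured part and then handled by the local analysis. That machinery can be set up, but it is not the routine step your sketch treats it as.

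Second, and this is the real error: your reduction ``the density bound forces $|A|>\tfrac58\varphi(W)$,'' where $A\subseteq\mathbb{Z}_W^*$ is the set of classes in which $Q$ has positive (lower) density, is false. Since $\liminf$ is superadditive, \eqref{eq-des-1} bounds the \emph{global} lower density, but the per-class lower densities can sum to far less --- indeed $A$ can be empty: rotate, across doubly-exponentially growing scales, which $(\tfrac58+\varepsilon)\varphi(W)$ reduced classes carry $Q$; each class is vacated infinitely often, so its lower density is $0$, while the global $\liminf$ stays above $\tfrac58$. (The construction in Theorem \ref{thm2} of this very paper exploits exactly this scale-switching phenomenon.) Consequently the \emph{set} statement of Proposition \ref{shao-theorem} cannot be invoked through any fixed $A$; what is needed is, at each scale $N$ separately, a \emph{weighted} local theorem: for $W$ odd square-free and $\delta:\mathbb{Z}_W^*\to[0,1]$ with mean at least $\tfrac58+\varepsilon$, the ternary convolution $\delta*\delta*\delta(N)\geq c(\varepsilon)>0$ uniformly in $W$ and in the density vector. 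That weighted theorem is the actual new core of \cite{Shao} --- the set version Proposition \ref{shao-theorem} is a corollary of it, not a substitute for it --- and your last paragraph correctly names this as ``the delicate point'' but then leaves it unproved. So the heart of the argument is identified but missing, and the bridge you do supply (via the set $A$) breaks.
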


In \cite{Yang-Togbe}, Yang and Togb\'{e} also constructed a set $Q$ of primes with upper relative prime density $1$ such that there are infinitely many odd integers $n$ that cannot be represented as 
$
q_1 + q_2 + q_3=2n+1
$
with $q_i\in Q~(1\le i\le 3)$. Moreover, the set $Q$ in their construction has lower relative prime density $1/3$. It now seems of interest to step a little further on the counter example of  Problem 67, pursuing the maximal value of the lower relative prime density.  

We now state our main result, which may appear surprising, in view of the proposition above.
\begin{theorem}\label{thm2}
There is an infinite set $Q$ of primes with upper relative prime density $1$ and lower relative prime density $5/8$ such that there are infinitely many odd integers $2n+1$ that cannot be represented as 
$
q_1 + q_2 + q_3=2n+1
$
with $q_i\in Q~(1\le i\le 3)$.
\end{theorem}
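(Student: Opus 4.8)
The plan is to realize the sharp modulo $15$ obstruction along a rapidly growing scale, interlacing \emph{rich} blocks (all primes, to push the upper density up to $1$) with \emph{lean} blocks (only $\tfrac58$ of the primes, to pull the lower density down to $5/8$), while arranging the blocking of the target integers so that the rich blocks cannot be recruited into any representation. Concretely, I would fix the residue set $S=\{1,2,4,7,13\}\subset(\mathbb Z/15\mathbb Z)^*$ with $14\notin S+S+S$, and aim for odd targets $N_k\equiv 14\pmod{15}$, that is $N_k\equiv 29\pmod{30}$.

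First I would choose breakpoints $c_0<c_1<c_2<\cdots$ growing so fast that $c_{j}=o(c_{j+1})$, putting into $Q$ all primes in each rich block $(c_{2i},c_{2i+1}]$ and only primes whose residue lies in a stage-dependent set (described below) in each lean block $(c_{2i+1},c_{2i+2}]$. Since each block dwarfs everything below it, at $x=c_{2i+1}$ the ratio $Q(x)/\pi(x)$ tends to $1$ and at $x=c_{2i+2}$ it tends to $5/8$; the prime number theorem in arithmetic progressions (Dirichlet) makes these limits routine, yielding upper relative density $1$ and lower relative density $5/8$.

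The heart of the matter is the blocking, and here the naive modulo $15$ obstruction fails: the rich blocks contain primes in the residues $8,11,14\pmod{15}$, and a single such prime $p$ below $N_k$, together with two primes in $S$, can already realize the residue $14$, so no fixed small-modulus congruence survives upper density $1$. My device would be to enlarge the modulus at each stage. At stage $k$ I would pick a squarefree modulus $m_k$ exceeding the top $c_{2k-1}$ of every earlier rich block, and a set $S_k\subset(\mathbb Z/m_k\mathbb Z)^*$ with $|S_k|=\tfrac58\varphi(m_k)$ such that (i) every prime $p\le c_{2k-1}$ reduces into $S_k$ (which is conceivable because each such $p$ equals its own residue modulo $m_k$ and so can be absorbed individually), (ii) the primes admitted in the $k$-th lean block are exactly those whose residue lies in $S_k$, and (iii) $N_k\bmod m_k\notin S_k+S_k+S_k$. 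Granting such an $S_k$, every $Q$-prime below $N_k$ lies in $S_k\pmod{m_k}$, whence $N_k$ has no representation as a sum of three of them; letting $k\to\infty$ then produces infinitely many non-representable odd $N_k$.

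The main obstacle is precisely the construction of $S_k$ in the previous paragraph: one must reconcile the \emph{sharp} sumset-avoidance $N_k\notin S_k+S_k+S_k$ at density exactly $5/8$, which by Proposition \ref{shao-theorem} is only barely possible, with the requirement that $S_k$ swallow all of the finitely many dangerous small primes in the residues $8,11,14\pmod{15}$ carried by the earlier rich blocks. A product construction $S\times(\mathbb Z/\ell_k\mathbb Z)^*$ is too rigid, since absorbing a dangerous $p$ by adjoining its residue immediately reopens a sum $p+s+s'\equiv N_k$; so the real work is an additive-combinatorial surgery on the extremal modulo $15$ configuration that neutralizes each absorbed residue (for instance by simultaneously thinning the complementary classes in the large coordinate) without either exceeding density $5/8$ or letting $N_k$ back into the triple sumset. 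Carrying out this surgery, and checking that it leaves both the density and the sumset condition intact, is the technical core; the density bookkeeping and the appeal to Dirichlet's theorem are then routine.
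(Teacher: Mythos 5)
You have reproduced the paper's Step 1 almost exactly (alternating rich/lean blocks at doubly exponentially growing scales, lean blocks restricted to $\{1,2,4,7,13\}\pmod{15}$, targets $\equiv 29\pmod{30}$, Dirichlet for the two densities), but your proof stops at the point where the real difficulty sits: the existence of $(m_k,S_k)$ satisfying your (i)--(iii) at density $5/8$ is never established, only labelled ``the technical core'' --- and there are concrete reasons it cannot be established. To keep $N_k\notin S_k+S_k+S_k$ at density (asymptotically) $5/8$ you are pinned to moduli carrying the mod-$15$ obstruction: over moduli without small prime factors the avoidance threshold is strictly lower (already for prime $m_k$, Cauchy--Davenport forces $3S_k=\mathbb{Z}_{m_k}$ once $|S_k|>(m_k+2)/3$, and $5/8>1/3$), so $15\mid m_k$ and $S_k$ must essentially be a union of fibres over $A_1=\{1,2,4,7,13\}\bmod 15$. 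Absorbing even a single rich-block prime $r\equiv 8,11,14\pmod{15}$ into such an $S_k$ is then ruinous: avoidance requires $S_k\cap(N_k-r-S_k)=\emptyset$, but $N_k-r$ lies, modulo $15$, in $A_1+A_1$, so it has $\gg\varphi(m_k)$ representations $s+s'$ with $s,s'\in S_k$; since the pairs $\{s,\,N_k-r-s\}$ are pairwise disjoint, destroying all of them costs $\gg\varphi(m_k)$ deletions --- a \emph{positive proportion} of $S_k$ per absorbed prime --- and there are $\pi(c_{2k-1})\to\infty$ primes to absorb. Your fallback of ``thinning the complementary classes in the large coordinate'' hits the same wall: each disjointness constraint sacrifices about half of two residue classes, and the constraints accumulate. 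The structural point you miss is that in $\mathbb{Z}_m$ a density-$5/8$ set represents each element of its sumset with a positive proportion of representations, so deleting representations is intrinsically expensive; no enlargement of the modulus repairs this, which is why no purely congruence-based obstruction can reach lower density exactly $5/8$.

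The paper escapes precisely by abandoning quotients and deleting actual primes. Its Step 2 trims the interval $\big[x_{2k+1}-x_{2k}-x_{2k+1}/\sqrt{\log x_{2k+1}},\,x_{2k+1}\big]$ off the top of each lean block, so any sum of two small primes and one lean prime undershoots the target $x_{2k+1}$; its Step 3 removes, for each small prime $p\le x_{2k}$, every prime occurring in \emph{any} representation $x_{2k+1}-p=w+v$ with $w,v$ in the lean block. This deletion is affordable only in the integers: Selberg's upper bound sieve gives $t_p\ll x_{2k+1}\log\log x_{2k+1}/(\log x_{2k+1})^2=o\big(\pi(x_{2k+1})\big)$ representations per $p$, and the doubly exponential growth keeps the number of small primes below $x_{2k}\le\log\log x_{2k+1}$, so the total loss is negligible and both densities survive at $1$ and $5/8$. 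The case analysis (all three summands lean: mod-$15$ obstruction since $x_{2k+1}\equiv 29\pmod{30}$; two or three small summands: size after trimming; one small plus two lean: the sieve deletion) then closes the proof. To salvage your write-up, replace the congruence ``surgery'' --- which is not merely unfinished but structurally blocked --- by this sieve-based deletion; the remainder of your outline already matches the paper.
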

\begin{proof}
Let $\mathcal{P}$ be the set of primes and $S$ any subset of $\mathcal{P}$. For any $x$, let
$S(x)$ be the number of elements of $S$ not exceeding $x$.
Define
$$
\overline{d}(S)=\limsup_{x\rightarrow \infty}
\frac{\log x}{x} |\{s_i : s_i  \leq x,~  s_i \in  S\}|
$$
and
$$
\underline{d}(S)=\liminf_{x\rightarrow \infty}
\frac{\log x}{x} |\{s_i : s_i  \leq x,~  s_i \in  S\}|.
$$
We will construct the desired set $Q$ step by step.

{\it Step 1.} We first construct a set $H\subset \mathcal{P}$ with $\overline{d}(H)=1$ and $\underline{d}(H)=\frac 58$.
Let $x_1$ be a sufficiently large number and
$
x_{2}=e^{e^{x_1}}.
$
Define
$$
H_1=(x_1,x_2]\cap\mathcal{P}.
$$
Let $d_2=e^{e^{x_2}}$.
Then we choose some odd integer $x_3>d_{2}$ satisfying 
$
x_3\equiv 29\pmod{30}.
$
Let
$$
H_2=\big\{p\in \mathcal{P}: x_2<p\le x_{3},~ p\equiv a\!\!\!\pmod{15}~\text{for~some~}a\in A_1\big\},
$$
where $A_1=\{1,2,4,7,13\}$ is the set given by Shao \cite{Shao}. 
Now, suppose that we have already gave the definitions of $H_j$ and $x_{j+1}$ for $j\le 2k$. Then we define $H_{2k+1},H_{2k+2}, x_{2k+2}$, and $x_{2k+3}$ recursively  as follows. Define
$$
x_{2k+2}=e^{e^{x_{2k+1}}} \quad
\text{and} \quad 
H_{2k+1}=(x_{2k+1},x_{2k+2}]\cap\mathcal{P}.
$$
Let 
$
d_{2k+2}=e^{e^{x_{2k+2}}}.
$
Then we choose some odd integer $x_{2k+3}>d_{2k+2}$ satisfying $x_{2k+3}\equiv 29\pmod{30}$.
Define
$$
H_{2k+2}=\big\{p\in \mathcal{P}: x_{2k+2}<p\le x_{2k+3}, ~p\equiv a\!\!\!\!\pmod{15}~\text{for~some~}a\in A_1\big\}.
$$
Let $H=\cup_{k=1}^{\infty}H_k$. Then we clearly have $\overline{d}(H)=1$ and $\underline{d}(H)=\frac 58$ from Dirichlet's theorem in arithmetic progressions.

{\it Step 2.} We next drop some elements of $H$ to form $W$ such that
$$W(x)\sim H(x), \quad \text{as~}x\rightarrow\infty.$$
As a consequence, we have $W\subset \mathcal{P}$ with $\overline{d}(W)=1$ and $\underline{d}(W)=\frac 58$.

For any positive integer $k$, let $W_{2k-1}=H_{2k-1}$ and
\begin{align}\label{W-H}
W_{2k}=H_{2k}\setminus \bigg[x_{2k+1}-x_{2k}-\frac{x_{2k+1}}{\sqrt{\log x_{2k+1}}},x_{2k+1}\bigg].
\end{align}
Let $W=\cup_{k=1}^{\infty}W_k$.
It is easy to see $W(x)\sim H(x)$ as $x\rightarrow\infty$ since
$$
\bigg|\Big[x_{2k+1}-x_{2k}-\frac{x_{2k+1}}{\sqrt{\log x_{2k+1}}},x_{2k+1}\Big]\cap\mathcal{P}\bigg|=o\Big(\frac{x_{2k+1}}{\log x_{2k+1}}\Big).
$$

{\it Step 3.} We continue dropping some elements of $W$ to form our desired set $Q$. 
For any positive integer $k$, let $Q_{2k-1}=W_{2k-1}=H_{2k-1}$.
For any prime $p$ with $x_1<p\le x_{2k}$, we consider the representations of the difference
$
x_{2k+1}-p
$ 
as the sum of two primes in $W_{2k}$.
Suppose that
\begin{align}\label{uv}
x_{2k+1}-p=w_1+v_1=w_2+v_2=\cdots=w_{t_p}+v_{t_p} 
\end{align}
with $w_i, v_i\in W_{2k}$ are all the representations of 
$
x_{2k+1}-p
$
as the sum of two elements of $W_{2k}$. We now let
\begin{align}\label{lastone}
Q_{2k}=W_{2k}\setminus \big\{w_i,v_i: 1\le i\le t_p,~x_1< p\le x_{2k} \big\}.
\end{align}
Finally, we define $Q=\cup_{k=1}^{\infty}Q_k$.

We will prove the following facts, from which we complete the proof. 

{\it Fact 1. We have $\overline{d}(Q)=1$ and $\underline{d}(Q)=\frac 58$.} 

For this fact,
it suffices to prove
$$Q(x)\sim W(x), \quad \text{as~}x\rightarrow\infty.$$
Following a standard result \cite[Theorem 7.2]{Nathanson} on the number of representations of an integer as a sum of two primes, which can be deduced from Selberg's upper bound sieve,
the following estimate holds:
\begin{align}\label{number-eq-1}
t_p\ll \frac{x_{2k+1}}{(\log x_{2k+1})^2}\prod_{p'|(x_{2k+1}-p)}\Big(1+\frac1{p'}\Big)\ll  \frac{x_{2k+1}\log\log x_{2k+1}}{(\log x_{2k+1})^2}
\end{align}
for any $x_1< p\le x_{2k}$, where the implied constants are absolute.
From (\ref{number-eq-1}), we clearly have
\begin{align}\label{nanjing-1}
|Q_{2k}-W_{2k}|&\leq \sum_{x_1<p\le x_{2k}}t_p\nonumber\\
&\ll
\frac{x_{2k}x_{2k+1}\log\log x_{2k+1}}{(\log x_{2k+1})^2}\nonumber\\
&\ll \frac{x_{2k+1}(\log\log x_{2k+1})^2}{(\log x_{2k+1})^2}.
\end{align}
For any $x_1< p\le x_{2k}$ and $1\le i\le t_p$ we see from (\ref{W-H}) and (\ref{uv}) that 
\begin{align}\label{nanjing-2}
w_i,v_i>\frac{x_{2k+1}}{\sqrt{\log x_{2k+1}}}.
\end{align}
We conclude from (\ref{nanjing-1}) and (\ref{nanjing-2}) that
$Q(x)\sim W(x),$ as $x\rightarrow\infty$
since 
$$
\pi\bigg(\frac{x_{2k+1}}{\sqrt{\log x_{2k+1}}}\bigg)\gg \frac{x_{2k+1}}{(\log x_{2k+1})^{3/2}},
$$
where $\pi(x)$ is the number of primes not exceeding $x$.

{\it Fact 2. For sufficiently large $k$, we have 
$
x_{2k+1}\not\in Q+Q+Q.
$}

Assume the contrary, we have
$
x_{2k+1}=q_1+q_2+q_3~ (q_1\le q_2\le q_3).
$
We separate our discussions into three case.

{\it Case I.} $q_1,q_2,q_3\in Q_{2k}$.
This is an apparent contradiction since any odd integer $29\pmod{30}$ is not contained in the ternary sumset $Q_{2k}+Q_{2k}+Q_{2k}$.

{\it Case II.} $q_1,q_2\not\in Q_{2k}$ and $q_3\in Q_{2k}$. By the construction, we have
$$
q_1+q_2+q_3\le 2x_{2k}+\Big(x_{2k+1}-x_{2k}-\frac{x_{2k+1}}{\sqrt{\log x_{2k+1}}}\Big)<x_{2k+1},
$$
which is clearly a contradiction.

{\it Case III.} $q_1,q_2,q_3\not\in Q_{2k}$. This is impossible since $q_1+q_2+q_3<x_{2k+1}.$

{\it Case IV.} $q_1\not\in Q_{2k}$ and $q_2, q_3\in Q_{2k}$. By (\ref{uv}) and (\ref{lastone}), 
$$
x_{2k+1}-q_1\neq q_2+q_3,
$$
which is a contradiction.
\end{proof}

\section*{Acknowledgments}
We thank Yong-Gao Chen and Honghu Liu for their interest in this work.

Y. D. was supported by National Natural Science Foundation of China  (Grant No. 12201544) and China Postdoctoral Science Foundation (Grant No. 2022M710121).

C. E. was supported by a joint FWF-ANR project ArithRand (I 4945-N and ANR-20-CE91-0006).

\end{document}